%
\documentclass[runningheads]{llncs}
\usepackage{graphicx}
%
\usepackage{hyperref}

\usepackage{amssymb}
\usepackage{amsmath}
\usepackage{array}
\usepackage{stmaryrd}
\usepackage{tikz}
\usepackage[all]{xy}
\usepackage{charter}

\renewenvironment{proof}{{\par\noindent\itshape Proof.}\ \rmfamily}{\hfill\color{gray}$\blacksquare$\par\noindent}

\newcommand{\D}{\mathcal{D}}
\newcommand{\G}{\mathcal{A}}
\newcommand{\K}{\mathcal{K}}
\newcommand{\M}{\mathcal{M}}
\renewcommand{\P}{\mathfrak{P}}
\newcommand{\Q}{\mathcal{Q}}
\newcommand{\R}{\mathfrak{G}}

\renewcommand{\a}{{\overline{a}}}
\renewcommand{\b}{{\overline{b}}}
\renewcommand{\c}{{\overline{c}}}
\newcommand{\p}{{\overline{p}}}
\newcommand{\q}{{\overline{q}}}
\renewcommand{\r}{{\overline{r}}}

\newcommand{\lr}[1]{\langle #1 \rangle}

\newcommand{\BP}{\mathbb{P}}
\newcommand{\BV}{\mathbb{V}}
\newcommand{\EDL}{\textbf{EDL}}
\newcommand{\EDG}{\textbf{EDG}}

\newcolumntype{L}{>{$}l<{$}}
\newcolumntype{R}{>{$}r<{$}}

\renewcommand{\phi}{\varphi}

\begin{document}
\title{Epistemic Logic with Partial Dependency Operator
}
%
%
\author{Xinyu Wang\inst{1}\orcidID{0000-0002-4811-693X}}
\authorrunning{X. Wang}
%
\institute{School of Electronics Engineering and Computer Science, Peking University\\
\email{xinyuwang1998@pku.edu.cn}}
%
\maketitle              
\begin{abstract}
In this paper, we introduce \textit{partial} dependency modality $\D$ into epistemic logic so as to reason about \textit{partial} dependency relationship in Kripke models. The resulted dependence epistemic logic possesses decent expressivity and beautiful properties. Several interesting examples are provided, which highlight this logic's practical usage. The logic's bisimulation is then discussed, and we give a sound and strongly complete axiomatization for a sub-language of the logic.

\keywords{Epistemic logic \and Knowing value \and Partial dependency.}
\end{abstract}
\section{Introduction}

Following some previous fundamental work on ``knowing value'' \cite{Wang1,Wang2,Wang3,Wang4,Ding0}, recent years have seen an abundance of interest in this novel kind of non-standard epistemic logic. There has been epistemic logic with functional dependency operator \cite{Ding}, which can help us reason about knowing that the value of certain variable is functionally decided by some other variables. For instance, the agent knows that $y=x^2$, so he knows that $y$ functionally depends on $x$ even if without knowing the exact values of $x$ or $y$.

Nevertheless, the real world is never so ideal as a simple parabola. As a matter of fact, in a lot of practical cases, the value of a dependent variable $y$ is usually influenced by thousands of independent factors as $x_1$, $x_2$, $\ldots$ in a quite complicated way, such that it is virtually impossible to obtain a detailed function to precisely determine the value of $y$. Therefore, in both scientific and social study, the method of control variable gets widely used. We often set the values of all the other variables rigid, only change the value of an independent variable $x$ and observe the change of the dependent variable $y$. If the value of $y$ varies with the value of $x$, then we conclude that $y$ \textit{partially} depends on $x$. In this paper, we introduce modality $\D$ in order to express this kind of \textit{partial} dependency relationship.

There have also been dependence and independence logics dealing with dependency relationship between variables \cite{Vaananen,Gradel,Galliani,Stanford}, and we will discuss our logic's connection to them in Remark \ref{conre}. A similar definition for dependency relationship also appears in Halpern's recent book, pp. 14-19. \cite{Halpern} However, the start point of our work is epistemic logic as well as the Kripke model, and we would like to incorporate \textit{partial} dependency relationship between variables into the agent's knowledge so that we shall obtain an epistemic logic of ``knowing dependency'', which is hence named as dependence epistemic logic. This dependence epistemic logic proves to possess further affluent expressivity as well as rather straightforward properties.

In the Kripke model for our dependence epistemic logic, besides a usual $\sim_i$ S5 equivalence relation representing the agent's knowledge, i.e., all the possible worlds that the agent cannot distinguish, there also exists another $\approx$ S5 equivalence relation representing the physical probability, i.e., all the possible worlds that share the same set of physical laws with the current world. Generally speaking, these two equivalence relations do not have to have any correlation, and thus in the language, the former is characterized by an S5 modality $\K$, while the latter is characterized by another independent S5 modality $\G$. This kind of framework is first introduced by another recent work \cite{Knowall}, and so readers who get confused with the conception of two independent equivalence relations in the model are \textit{strongly} recommended to refer to that paper.

Then the \textit{partial} dependency relationship is valuated in the $\approx$ equivalence class, since dependency relationship between variables is in fact related to some universal physical law and thus concerns not only the current exact world but also all the other worlds that are physically potentially possible. Actually, we introduce two different modalities $\D_g$ and $\D_l$ to characterize \textit{partial} dependency relationship. Their respective semantics is both based on the discussion in the beginning about what modality $\D$ should be like, except for that, the former $\D_g$ is valuated globally in a whole $\approx$ equivalence class, while the latter $\D_l$ fixes one reference point as the current exact world and so is valuated locally. Readers will soon become clear about what $\D_g$ and $\D_l$ mean respectively through the following Section \ref{preli} on preliminaries including the language, model and semantics, and the correlation between these two modalities also gets discussed in Remark \ref{expre}. Examples in Section \ref{examp} illustrate that $\D_g$ is helpful in analyzing universal physical laws while $\D_l$ is useful in expressing counterfactual assumptions, in surprising accordance with our very intuition as well as commonsense, so the practicality of $\D_g$ and $\D_l$ counts to why we introduce both modalities.

The rest of the paper is organized as follows. We lay out the basics of the language and the semantics in Section \ref{preli}. Several interesting examples are illustrated in Section \ref{examp}. A bisimulation notion for this dependence epistemic logic then gets thoroughly discussed in Section \ref{bisim}, followed by a sound and strongly complete axiomatization for a sub-language in Section \ref{axiom}. We finally conclude this paper and propose future research directions in Section \ref{concl}.

\section{Preliminaries}\label{preli}

\begin{definition}[Language $\EDL$]
For a fixed countable set of propositions $\BP$, and a fixed countable set of variables $\BV$, the language $\EDL$ of dependence epistemic logic is defined recursively as:

\begin{align*}
  \phi::=\top\mid p\mid\neg\phi\mid(\phi\land\phi)\mid\K\phi\mid\G\phi\mid\D_g(X,Y)\mid\D_l(X,Y)
\end{align*}

where $p\in\BP$, and $X$ as well as $Y$ are finite subsets of $\BV$. $\D_g(X,Y)$ reads as $Y$ depends on $X$ globally, while $\D_l(X,Y)$ reads as $Y$ depends on $X$ locally. We define $\bot$, $\lor$ and $\to$ as usual.
\end{definition}
\paragraph{\textbf{Important Notation}}In the following parts of this paper, when some property applies to both $\D_g$ and $\D_l$, we will simply omit the subscript and write down only one theorem, lemma, axiom, etc. concerning $\D$ for convenience, and the omitting is also similar for other notations derived from $\D$.

If $X=\{x\}$, we will also denote $\D(\{x\},Y)$ as $\D(x,Y)$ for simplicity, and likely for $Y$ if $Y=\{y\}$.

\begin{definition}[Model]
A dependence epistemic model $\M$ is $\lr{S,T,V,U,\sim_i,\approx}$:

\begin{itemize}
  \item $S$ is a set of possible worlds.
  \item $T$: $S\times\BP\to\{0,1\}$.
  \item $V\supseteq\BV$ is a countable set of variable objects.
  \item $U$: $S\times V\to\mathbb{N}$.
  \item $\sim_i$ is an equivalence relation over $S$.
  \item $\approx$ is an equivalence relation over $S$.
\end{itemize}

As the convention in first-order logic, while $\BV$ in the language are names for variables, $V$ in the model interpret each name with a concrete object and also may consist of other variable objects whose names are not included in the language. Since the language $\EDL$ excludes the equal sign $=$, every name in $\BV$ can be managed to be interpreted differently in $V$, so we simply let $V\supseteq\BV$ and do not make explicit distinctions between names and objects in the following without causing any confusion. Then $U$ is the function that assigns each variable on each possible world with a (countably possible) value, which is supposed to be uniformly numbered by $\mathbb{N}$ for convenience.

Sometimes we apply another extra stipulation on the model in order to satisfy our practical needs: for any proposition $p\in\BP$, it may have its corresponding variable $\p\in\BV$. If so, we then stipulate that $\forall s\in S$, $U(s,\p)=T(s,p)$. The following Subsections \ref{adoor} and \ref{judge} present examples of this kind.
\end{definition}

\begin{definition}[Semantics]\label{seman}
We define that $\forall s,t\in S$, $\forall$ subset $X\subseteq V$, $X_s=X_t$ iff $\forall x\in X,U(s,x)=U(t,x)$, while of course, $X_s\neq X_t$ iff $\exists x\in X,U(s,x)\neq U(t,x)$. A pointed model $\M,s$ is a dependence epistemic model $\M$ with a possible world $s\in S$.

\begin{center}
  \begin{tabular}{|LLL|}
  \hline
  \M,s\vDash\top & \iff & \text{always}\\
  \M,s\vDash p & \iff & T(s,p)=1\\
  \M,s\vDash\neg\phi & \iff & \text{not }\M,s\vDash\phi\\
  \M,s\vDash(\phi\land\psi) & \iff & \M,s\vDash\phi\text{ and }\M,s\vDash\psi\\
  \M,s\vDash\K\phi & \iff & \forall t\in S,t\sim_i s,\M,t\vDash\phi\\
  \M,s\vDash\G\phi & \iff & \forall t\in S,t\approx s,\M,t\vDash\phi\\
  \M,s\vDash\D_g(X,Y) & \iff & \exists u,v\in S,u\approx v\approx s,\\
  & & (V\backslash(X\cup Y))_u=(V\backslash(X\cup Y))_v,X_u\neq X_v,Y_u\neq Y_v\\
  \M,s\vDash\D_l(X,Y) & \iff & \exists t\in S,t\approx s,\\
  & & (V\backslash(X\cup Y))_t=(V\backslash(X\cup Y))_s,X_t\neq X_s,Y_t\neq Y_s\\
  \hline
  \end{tabular}
\end{center}

When it is not that $\M,s\vDash\phi$, we denote it as $\M,s\nvDash\phi$.
\end{definition}

\begin{remark}[Expressivity of $\D_g$ and $\D_l$]\label{expre}
We are able to perceive through Definition \ref{seman} that $\D_g$ is actually definable using $\neg$, $\G$ and $\D_l$, demonstrated as the following:

\begin{align*}
  \D_g(X,Y)\leftrightarrow\neg\G\neg\D_l(X,Y)
\end{align*}

In fact, $\D_l$ is strictly more expressive than $\D_g$, which will become clear to readers through our discussion for bisimulation in Section \ref{bisim}. Nevertheless, due to $\D_g$'s simplicity and usefulness, we will take the language with $\D_g$ but without $\D_l$ as a sub-language of $\EDL$.
\end{remark}

\begin{definition}[Language $\EDG$]
For a fixed countable set of propositions $\BP$, and a fixed countable set of variables $\BV$, the language $\EDG$ is defined recursively as:

\begin{align*}
  \phi::=\top\mid p\mid\neg\phi\mid(\phi\land\phi)\mid\K\phi\mid\G\phi\mid\D_g(X,Y)
\end{align*}

where $p\in\BP$, and $X$ as well as $Y$ are finite subsets of $\BV$.
\end{definition}

The model and semantics are the same.

\begin{remark}[Connection to Independence Logic]\label{conre}
If the total set of variables $V$ is finite and explicitly known, then modality $\D_g$ can be expressed in inclusion logic, a sub-language of independence logic \cite{Inclusion}, as the following:\footnote{As for the notation, we prefer to use $X$ and $Y$ instead of $\overrightarrow{x}$ or $\overrightarrow{y}$. Anyway, their respective meanings in this specific context should be clear to readers.}

\begin{align*}
  \D_g(X,Y)\iff\exists\overrightarrow{w_1}\overrightarrow{x_1}\overrightarrow{y_1}\exists\overrightarrow{w_2}\overrightarrow{x_2}\overrightarrow{y_2}(\overrightarrow{w_1}\overrightarrow{x_1}\overrightarrow{y_1}\subseteq(V\backslash(X\cup Y))XY\land\\
  \overrightarrow{w_2}\overrightarrow{x_2}\overrightarrow{y_2}\subseteq(V\backslash(X\cup Y))XY\land\overrightarrow{w_1}=\overrightarrow{w_2}\land\neg\overrightarrow{x_1}=\overrightarrow{x_2}\land\neg\overrightarrow{y_1}=\overrightarrow{y_2})
\end{align*}

However, this form puts too many restrictions and becomes too lengthy, while we actually want the total set $V$ to be clear from our language so that we can reason with simple and compact logic. In fact, the team model on which independence logic is based is quite different from the Kripke possible world model \cite{Hodges}, both in technique and in philosophical explanation, and hence they are very unlike logics. While independence logic, inherited from first order logic, always reasons globally, epistemic logic, rooted from modal logic, usually reasons locally, which is demonstrated by this obvious fact that local modality $\D_l$ can surely not be defined in independence logic.
\end{remark}

\section{Examples}\label{examp}

\subsection{An Open Door}\label{adoor}

Let $p$ denote that the door of the room is open now, $q$ denote that the agent possesses the key of the door, and $r$ denote that the agent is able to enter the room. Let us suppose that the agent has perfect knowledge, so $\sim_i$ relation is only reflexive. Then we have:\footnote{When drawing all these figures in this paper, for brevity we will omit some relation lines which can be deduced from S5 equivalence class requirements.}

$$\xymatrix{
\txt{$s:p,q,r$\\$\p=1$\\$\q=1$\\$\r=1$}\ar@{-}[r]|{\approx} &
\txt{$p,\neg q,r$\\$\p=1$\\$\q=0$\\$\r=1$}\ar@{-}[r]|{\approx} &
\txt{$\neg p,q,r$\\$\p=0$\\$\q=1$\\$\r=1$}\ar@{-}[r]|{\approx} &
\txt{$\neg p,\neg q,\neg r$\\$\p=0$\\$\q=0$\\$\r=0$}\\
}$$

It is not difficult to observe that $\M,s\vDash\K\D_g(\p,\r)$ and $\M,s\vDash\K\neg\D_l(\p,\r)$. The former says that the agent knows whether he is able to enter the room is somewhat related to whether the door is open now -- if he did not possess the key. And the latter says that under the present situation, since the agent does possess the key, he surely knows that if this precondition is kept unchanged, then he was still able to open the door to enter the room even if the door was now closed. Namely, whether he is able to enter the room does not depend on whether the door is open now, which provides us with a fancy way to express counterfactual assumptions.

\subsection{Error-included Experiment}

Suppose we are carrying out an experiment, and we know from theory that there are two independent variables $x$ and $y$ which may influence the value of the dependent variable $z$, where the value of $x$ is well under control but $y$ represents some random experimental error, and so of course, we cannot control or even measure the value of $y$. The only thing we know about $y$ is that it will be either $1$ or $2$ during every experiment.

Now we have done this experiment twice. When $x = 1$, $z = 1$. When $x = 2$, $z = 2$. By combining all kinds of possibilities, we can have the model as:

$$\xymatrix{
\txt{$x=1$\\$y=1$\\$z=1$}\ar@{-}[r]|{i}\ar@{-}[d]|{\approx} &
\txt{$x=1$\\$y=1$\\$z=1$}\ar@{-}[r]|{i}\ar@{-}[d]|{\approx} &
\txt{$x=1$\\$y=2$\\$z=1$}\ar@{-}[r]|{i}\ar@{-}[d]|{\approx} &
\txt{$x=1$\\$y=2$\\$z=1$}\ar@{-}[d]|{\approx}\\
\txt{$x=2$\\$y=1$\\$z=2$}\ar@{-}[r]|{i} &
\txt{$x=2$\\$y=2$\\$z=2$}\ar@{-}[r]|{i} &
\txt{$x=2$\\$y=1$\\$z=2$}\ar@{-}[r]|{i} &
\txt{$x=2$\\$y=2$\\$z=2$}\\
}$$

Can we be confident that $z$ depends on $x$? Certainly not, because the change of $z$ may be brought about by the change of $y$. As a matter of fact, on every possible world $s$ there is $\M,s\nvDash\K\D_g(x,z)$.

However, if we have further done the third experiment, and when $x = 3$, $z = 3$. Now can we be confident that $z$ depends on $x$? Indeed we can. This fact can be easily observed through the following huge model, where $\M,s\vDash\K\D_g(x,z)$ on every possible world $s$:

$$\xymatrix@C=1pc{
\txt{$x=1$\\$y=1$\\$z=1$}\ar@{-}[d]|{\approx}\ar@{-}[r]|{i} &
\txt{$x=1$\\$y=1$\\$z=1$}\ar@{-}[d]|{\approx}\ar@{-}[r]|{i} &
\txt{$x=1$\\$y=1$\\$z=1$}\ar@{-}[d]|{\approx}\ar@{-}[r]|{i} &
\txt{$x=1$\\$y=1$\\$z=1$}\ar@{-}[d]|{\approx}\ar@{-}[r]|{i} &
\txt{$x=1$\\$y=2$\\$z=1$}\ar@{-}[d]|{\approx}\ar@{-}[r]|{i} &
\txt{$x=1$\\$y=2$\\$z=1$}\ar@{-}[d]|{\approx}\ar@{-}[r]|{i} &
\txt{$x=1$\\$y=2$\\$z=1$}\ar@{-}[d]|{\approx}\ar@{-}[r]|{i} &
\txt{$x=1$\\$y=2$\\$z=1$}\ar@{-}[d]|{\approx}\\
\txt{$x=2$\\$y=1$\\$z=2$}\ar@{-}[d]|{\approx}\ar@{-}[r]|{i} &
\txt{$x=2$\\$y=1$\\$z=2$}\ar@{-}[d]|{\approx}\ar@{-}[r]|{i} &
\txt{$x=2$\\$y=2$\\$z=2$}\ar@{-}[d]|{\approx}\ar@{-}[r]|{i} &
\txt{$x=2$\\$y=2$\\$z=2$}\ar@{-}[d]|{\approx}\ar@{-}[r]|{i} &
\txt{$x=2$\\$y=1$\\$z=2$}\ar@{-}[d]|{\approx}\ar@{-}[r]|{i} &
\txt{$x=2$\\$y=1$\\$z=2$}\ar@{-}[d]|{\approx}\ar@{-}[r]|{i} &
\txt{$x=2$\\$y=2$\\$z=2$}\ar@{-}[d]|{\approx}\ar@{-}[r]|{i} &
\txt{$x=2$\\$y=2$\\$z=2$}\ar@{-}[d]|{\approx}\\
\txt{$x=3$\\$y=1$\\$z=3$}\ar@{-}[r]|{i} &
\txt{$x=3$\\$y=2$\\$z=3$}\ar@{-}[r]|{i} &
\txt{$x=3$\\$y=1$\\$z=3$}\ar@{-}[r]|{i} &
\txt{$x=3$\\$y=2$\\$z=3$}\ar@{-}[r]|{i} &
\txt{$x=3$\\$y=1$\\$z=3$}\ar@{-}[r]|{i} &
\txt{$x=3$\\$y=2$\\$z=3$}\ar@{-}[r]|{i} &
\txt{$x=3$\\$y=1$\\$z=3$}\ar@{-}[r]|{i} &
\txt{$x=3$\\$y=2$\\$z=3$}\\
}$$

Whatever values $y$ may be in the three experiments, there must be at least two experiments in which $y$ is the same, so we can only explain the difference between $z$ in these two experiments as caused by the difference between the value of $x$. This scenario clearly explains why in all the natural science experiments, despite the universal existence of errors, we can still manage to obtain useful conclusions concerning our interested variables, by multiple experiments with relatively large data range.

\subsection{Judging a Case}\label{judge}

We have seen that global modality $\D_g$ can help us analyze complicated experimental results, while local modality $\D_l$ is very helpful in expressing counterfactual assumptions. And there are still trickier things worth examining. Until now, we have only proposed examples including modality $\D$ affecting solely on singletons. It may seem by intuitive guess that $\D(\{a,b\},c)$ tells very similar thing as $\D(a,c)\lor\D(b,c)$. Nevertheless, these two expressions are not exactly the same, and in fact, they may result in quite opposite epistemic consequences, as demonstrated by the following scenario.

Unfortunately, Charles got killed in a tragedy ($c$), which was related to Alan having done something ($a$) and/or Bob having done something ($b$). Firstly, let us suppose that either $a$ or $b$ could happen so as to cause $c$, and only one of them could have happened to be $c$'s indeed cause. However, on the current world $s$ we are yet not sure whether $a$ or $b$ actually happened to be the exact cause of $c$. This can be modeled as the following:

$$\xymatrix@C=4pc{
\txt{$s:a,\neg b,c$\\$\a=1$\\$\b=0$\\$\c=1$}\ar@{-}[r]|{i,\approx} &
\txt{$\neg a,b,c$\\$\a=0$\\$\b=1$\\$\c=1$}\ar@{-}[r]|{\approx} &
\txt{$\neg a,\neg b,\neg c$\\$\a=0$\\$\b=0$\\$\c=0$}\\
}$$

It is not difficult to observe that $\M,s\vDash\K\D_l(\{\a,\b\},\c)\land\K(\D_l(\a,\c)\lor\D_l(\b,\c))$. This is to say, it is within our knowledge that not only the whole group event $\{a,b\}$ is related to $c$, but also either $a$ or $b$ itself is alone related to $c$, namely, their influences on $c$ can be separated in concept. Hence, unless we obtain further evidence to pin down our knowledge in order to determine whether Alan or Bob was the real criminal, by presumption of innocence neither of them can be sentenced guilty for Charles' death.

Now let us turn to a second phenomenon, where $b$'s happening was a direct consequence of $a$'s happening. For instance, let $b$ denote that Bob killed Charles, and $a$ denote that Alan compelled Bob to kill Charles, either by threatening that he would have killed Bob otherwise or by Alan's mind control over Bob through magic or science fiction. In other words, we restrict ourselves to only consider possible worlds on which $a\rightarrow b$ holds in our Kripke model. Under this circumstance, we can model our knowledge as the following:

$$\xymatrix@C=4pc{
\txt{$s:a,b,c$\\$\a=1$\\$\b=1$\\$\c=1$}\ar@{-}[r]|{\approx} &
\txt{$\neg a,b,c$\\$\a=0$\\$\b=1$\\$\c=1$}\ar@{-}[r]|{\approx} &
\txt{$\neg a,\neg b,\neg c$\\$\a=0$\\$\b=0$\\$\c=0$}\\
}$$

At present, even physically speaking $b$ should be the only direct cause of $c$, which is demonstrated by $\G(b\leftrightarrow c)$ holding throughout the model, to our little surprise $\K\D_l(\b,\c)$ does not hold on the current world $s$. As a matter of fact, we have $\M,s\vDash\K\D_l(\{\a,\b\},\c)\land\K(\neg\D_l(\a,\c)\land\neg\D_l(\b,\c))$, a direct contrast against the former scene. This time we not only know that $c$ locally depends on $\{a,b\}$ as a whole, but also know that this dependency relationship should be viewed as an entirety instead of conceptually separable, and therefore, both Alan and Bob should be responsible for Charles' death. Further considering that $\K\G(a\rightarrow b)$ holds on $s$, a legal and rational sentence ought to be that Alan is the principal criminal while Bob is the coerced criminal, which precisely captures the meanings of all the formulae mentioned above.

\section{Bisimulation}\label{bisim}

\begin{definition}[$\Delta(u,v)$]
For any two possible worlds $u,v\in S$, we define:

\begin{align*}
  \Delta(u,v)=\left\{
  \begin{array}{cc}
       \{x\mid x\in\BV,U(u,x)\neq U(v,x)\}, & \text{ if }(V\backslash\BV)_u=(V\backslash\BV)_v\\
       \emptyset, & \text{otherwise}\\
  \end{array}
  \right.
\end{align*}
\end{definition}

\begin{definition}[Evidence]
For any three sets $W$, $X$ and $Y$, $W$ is called an evidence of $\lr{X,Y}$, iff $W\cap X\neq\emptyset$, $W\cap Y\neq\emptyset$, and $W\subseteq X\cup Y$.
\end{definition}

Compared with the original semantics defined in Definition \ref{seman}, we manage to rewrite part of it in an equivalent form as the following:

\begin{lemma}[Evidence Lemma I]\label{ev1le}
\begin{center}
\begin{tabular}{LLL}
  \M,s\vDash\D_g(X,Y) & \iff & \exists u,v\in S,u\approx v\approx s,\Delta(u,v)\text{ is an evidence of }\lr{X,Y}\\
  \M,s\vDash\D_l(X,Y) & \iff & \exists t\in S,t\approx s,\Delta(t,s)\text{ is an evidence of }\lr{X,Y}\\
\end{tabular}
\end{center}
\end{lemma}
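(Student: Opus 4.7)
The plan is to prove both equivalences by directly unfolding the definitions of $\Delta(u,v)$, ``evidence'', and the semantic clauses from Definition \ref{seman}. The key observation underlying everything is that $X,Y$ are finite subsets of $\BV$, so $V\setminus\BV \subseteq V\setminus(X\cup Y)$; this lets us translate between agreement/disagreement on the subset $\BV$ (which is what $\Delta$ tracks) and agreement/disagreement on the syntactic complement $V\setminus(X\cup Y)$ (which is what the original semantics uses).

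For the forward direction of the $\D_g$ clause, I take the witnesses $u,v$ produced by the original semantics, which satisfy $(V\setminus(X\cup Y))_u=(V\setminus(X\cup Y))_v$ together with $X_u\neq X_v$ and $Y_u\neq Y_v$. Since $V\setminus\BV\subseteq V\setminus(X\cup Y)$, we get $(V\setminus\BV)_u=(V\setminus\BV)_v$, so $\Delta(u,v)$ falls into the first case of its definition and equals $\{x\in\BV:U(u,x)\neq U(v,x)\}$. The three evidence conditions are then immediate: nonempty intersection with $X$ (resp.\ $Y$) follows from $X_u\neq X_v$ (resp.\ $Y_u\neq Y_v$) since $X,Y\subseteq\BV$, and the inclusion $\Delta(u,v)\subseteq X\cup Y$ is the contrapositive of the agreement on $V\setminus(X\cup Y)$.

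For the backward direction, suppose $\Delta(u,v)$ is an evidence of $\lr{X,Y}$. Because evidence is nonempty (it meets $X$), $\Delta(u,v)$ cannot be the ``otherwise'' $\emptyset$, hence $(V\setminus\BV)_u=(V\setminus\BV)_v$ and $\Delta(u,v)=\{x\in\BV:U(u,x)\neq U(v,x)\}$. Picking any variable in $\Delta(u,v)\cap X$ witnesses $X_u\neq X_v$, and symmetrically for $Y$. To verify $(V\setminus(X\cup Y))_u=(V\setminus(X\cup Y))_v$, I split an arbitrary $x\in V\setminus(X\cup Y)$ into the two cases $x\notin\BV$ (handled by $(V\setminus\BV)_u=(V\setminus\BV)_v$) and $x\in\BV\setminus(X\cup Y)$ (handled by $\Delta(u,v)\subseteq X\cup Y$).

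The $\D_l$ clause is the same argument with $v$ replaced throughout by the fixed reference world $s$, so no separate work is required. I do not anticipate any real obstacle; the only subtlety is remembering that $\Delta$ is defined only on the $\BV$-part of $V$ and defaults to $\emptyset$ when the non-$\BV$ parts of the two worlds disagree, which is precisely what makes the two formulations interchangeable once $X,Y\subseteq\BV$ is exploited.
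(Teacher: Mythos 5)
Your proof is correct and takes the same route as the paper, which simply states that the lemma follows directly from the semantics of Definition \ref{seman}; you have merely spelled out the routine unfolding in full. The one genuinely delicate point --- that $X,Y\subseteq\BV$ forces $V\backslash\BV\subseteq V\backslash(X\cup Y)$, so nonemptiness of the evidence rules out the $\emptyset$ default case of $\Delta$ --- is handled correctly.
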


\begin{proof}
Directly from the semantics defined in Definition \ref{seman}.
\end{proof}

\begin{definition}[$\P(s)$]
For any possible world $s\in S$, we define:

\begin{align*}
  \P_g(s)=\{\text{nonempty finite set }\Delta(u,v)\mid u,v\in S,u\approx v\approx s\}\\
  \P_l(s)=\{\text{nonempty finite set }\Delta(t,s)\mid t\in S,t\approx s\}
\end{align*}

It is obvious that $\forall s\in S$, $\P_l(s)\subseteq\P_g(s)\subseteq\{$nonempty finite set $W\mid W\subseteq\BV\}$.
\end{definition}

We again manage to rewrite part of the semantics in another equivalent form as the following, making use of the newly defined $\P(s)$:

\begin{lemma}[Evidence Lemma II]\label{ev2le}
\begin{center}
\begin{tabular}{LLL}
  \M,s\vDash\D_g(X,Y) & \iff & \exists W\in\P_g(s),W\text{ is an evidence of }\lr{X,Y}\\
  \M,s\vDash\D_l(X,Y) & \iff & \exists W\in\P_l(s),W\text{ is an evidence of }\lr{X,Y}\\
\end{tabular}
\end{center}
\end{lemma}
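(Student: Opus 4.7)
The plan is to derive Evidence Lemma II directly from Evidence Lemma I by observing that the ``nonempty finite'' qualifier built into the definition of $\P_g(s)$ and $\P_l(s)$ is automatically satisfied by every candidate evidence, hence imposes no real restriction. Concretely, I would first record the small observation that any set $W$ that is an evidence of $\lr{X,Y}$ is necessarily nonempty (since $W\cap X\neq\emptyset$) and finite (since $W\subseteq X\cup Y$, and $X,Y$ are finite subsets of $\BV$ by the syntax of $\EDL$). This means that as soon as some $\Delta(u,v)$ qualifies as an evidence, it already belongs to $\P_g(s)$, and similarly for $\P_l(s)$; and conversely, any element of $\P_g(s)$ or $\P_l(s)$ arises as some $\Delta(u,v)$ or $\Delta(t,s)$ with $u,v,t\approx s$.

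With that observation in hand, the global clause follows by a two-way chain of equivalences. From left to right: if $\M,s\vDash\D_g(X,Y)$, Evidence Lemma~\ref{ev1le} supplies $u,v\in S$ with $u\approx v\approx s$ and $\Delta(u,v)$ an evidence of $\lr{X,Y}$; by the preceding observation $\Delta(u,v)$ is nonempty and finite, hence $\Delta(u,v)\in\P_g(s)$, yielding the required witness $W$. From right to left: given $W\in\P_g(s)$ that is an evidence, the definition of $\P_g(s)$ unfolds $W$ as $\Delta(u,v)$ for some $u,v\approx s$, and Evidence Lemma~\ref{ev1le} then delivers $\M,s\vDash\D_g(X,Y)$. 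The local clause is handled by the identical argument, with the pair $(u,v)$ replaced by $(t,s)$ where $s$ is fixed as the reference point, using the local branch of Evidence Lemma~\ref{ev1le} and the definition of $\P_l(s)$.

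I do not anticipate any real obstacle; the lemma is essentially a cosmetic rephrasing of Evidence Lemma~\ref{ev1le} in terms of the bookkeeping sets $\P_g(s)$ and $\P_l(s)$. The only point requiring a moment's care is checking that the ``nonempty finite'' qualifier in the definition of $\P(s)$ is genuinely harmless, i.e., that it never disqualifies a $\Delta(u,v)$ that could otherwise serve as an evidence of $\lr{X,Y}$, and this is immediate from the definition of evidence together with the finiteness of $X$ and $Y$ in the language.
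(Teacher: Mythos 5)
Your proposal is correct and follows the same route as the paper, which simply derives Evidence Lemma II from Evidence Lemma I (the paper's entire proof is ``By Lemma \ref{ev1le}.''). Your added observation---that every evidence of $\lr{X,Y}$ is automatically nonempty and finite, so the qualifier in the definition of $\P(s)$ excludes nothing relevant---is exactly the detail the paper leaves implicit.
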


\begin{proof}
By Lemma \ref{ev1le}.
\end{proof}

\begin{definition}[Generative]
$\forall s\in S$, any nonempty finite set $W\subseteq\BV$ is called generative from $\P(s)$, iff for any two finite sets $X,Y\subseteq\BV$, such that $W$ is an evidence of $\lr{X,Y}$, there exists $W'\in\P(s)$, such that $W'$ is also an evidence of $\lr{X,Y}$.
\end{definition}

\begin{theorem}[Equivalence Theorem I]\label{eq1th}
For any two pointed models $\M,s$ and $\M',s'$, they satisfy exactly the same $\D(X,Y)$ formulae for any two finite sets $X,Y\subseteq\BV$ iff:

\begin{itemize}
  \item Zig: $\forall W\in\P(s),W\text{ is generative from }\P(s')$.
  \item Zag: $\forall W\in\P(s'),W\text{ is generative from }\P(s)$.
\end{itemize}
\end{theorem}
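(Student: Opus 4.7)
The plan is to reduce the theorem to a direct unfolding of definitions, using Evidence Lemma II as the bridge between syntactic $\D(X,Y)$ formulae and the set-theoretic conditions on $\P(s)$, $\P(s')$. Observe that by Lemma \ref{ev2le}, the statement ``$\M,s$ and $\M',s'$ satisfy exactly the same $\D(X,Y)$ formulae'' is equivalent to saying that, for every finite $X,Y\subseteq\BV$, there exists an evidence of $\lr{X,Y}$ in $\P(s)$ if and only if there exists one in $\P(s')$. The notion of ``generative'' is precisely engineered to capture one direction of this equivalence witness-by-witness, so the theorem should reduce to a routine verification.

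For the forward direction ($\Rightarrow$), I would fix any $W\in\P(s)$ and any finite $X,Y\subseteq\BV$ such that $W$ is an evidence of $\lr{X,Y}$. By Lemma \ref{ev2le}, this gives $\M,s\vDash\D(X,Y)$, and by hypothesis $\M',s'\vDash\D(X,Y)$, so Lemma \ref{ev2le} again supplies some $W'\in\P(s')$ which is an evidence of $\lr{X,Y}$. This is exactly the definition of $W$ being generative from $\P(s')$, establishing Zig. Zag follows by swapping the roles of $\M,s$ and $\M',s'$.

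For the backward direction ($\Leftarrow$), suppose $\M,s\vDash\D(X,Y)$ for some finite $X,Y\subseteq\BV$. By Lemma \ref{ev2le} there is some $W\in\P(s)$ which is an evidence of $\lr{X,Y}$. Applying Zig, $W$ is generative from $\P(s')$, so there exists $W'\in\P(s')$ that is also an evidence of $\lr{X,Y}$, and hence $\M',s'\vDash\D(X,Y)$ by Lemma \ref{ev2le}. The converse implication uses Zag in identical fashion. Since the argument treats $\D_g$ and $\D_l$ uniformly (the respective evidence lemmas have the same shape), we obtain both versions at once, consistently with the notational convention announced earlier.

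I do not anticipate a real obstacle here: the heavy lifting has already been front-loaded into Lemma \ref{ev2le} and into the careful definitions of evidence and generativity. The only point worth double-checking is that the quantifier pattern in ``generative'' matches what the semantics demands, namely that a single $W$ on one side need not be matched by a single $W'$ on the other across \emph{all} pairs $\lr{X,Y}$ simultaneously, but only pair by pair -- which is exactly how the definition is phrased and how the above argument uses it.
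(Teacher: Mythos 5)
Your proposal is correct and takes essentially the same route as the paper: both directions are direct unfoldings of the definition of ``generative'' through Lemma \ref{ev2le}, with the only cosmetic difference being that the paper argues the Zig direction by contradiction while you argue it directly (and you spell out the backward direction that the paper leaves as ``similar and easy''). Your closing remark about the pair-by-pair quantifier pattern in the definition of generativity is exactly the right point to check, and it checks out.
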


\begin{proof}
For the direction from left to right, we first concentrate on the Zig condition. If there exists $W\in\P(s)$, such that $W$ is not generative from $\P(s')$, then by definition, there exist two finite sets $X,Y\subseteq\BV$, such that $W$ is an evidence of $\lr{X,Y}$, but there does not exist $W'\in\P(s')$, such that $W'$ is an evidence of $\lr{X,Y}$. By Lemma \ref{ev2le}, this is equivalent to that $\M,s\vDash\D(X,Y)$ but $\M',s'\nvDash\D(X,Y)$, a contradiction. The Zag condition follows by symmetry.

The other direction can also be verified similarly and easily.
\end{proof}

\begin{definition}[$\R(s)$]
For any possible world $s\in S$, we define:

\begin{align*}
  \R(s)=\{W\mid W\text{ is generative from }\P(s)\}
\end{align*}

It is obvious that $\forall s\in S$, $\R_l(s)\subseteq\R_g(s)\subseteq\{$nonempty finite set $W\mid W\subseteq\BV\}$.
\end{definition}

\begin{theorem}[Equivalence Theorem II]\label{eq2th}
For any two pointed models $\M,s$ and $\M',s'$, they satisfy exactly the same $\D(X,Y)$ formulae for any two finite sets $X,Y\subseteq\BV$ iff $\R(s)=\R(s')$.
\end{theorem}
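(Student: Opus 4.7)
The plan is to reduce everything to Equivalence Theorem I. Rewriting the Zig and Zag conditions in the notation of $\R$, Zig just says $\P(s)\subseteq\R(s')$ and Zag says $\P(s')\subseteq\R(s)$. So it suffices to show that under the trivial fact $\P(s)\subseteq\R(s)$ (any $W\in\P(s)$ is generative from $\P(s)$ by taking $W'=W$), the two conditions $\P(s)\subseteq\R(s')$ and $\P(s')\subseteq\R(s)$ together are equivalent to the single equality $\R(s)=\R(s')$.

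For the direction $\R(s)=\R(s')\Rightarrow$ Zig and Zag, I would simply observe that $\P(s)\subseteq\R(s)=\R(s')$ gives Zig, and symmetrically Zag. The core of the work is the converse. Here I would prove a small transitivity/closure lemma: if every element of $\P(s')$ is generative from $\P(s)$, then every set generative from $\P(s')$ is also generative from $\P(s)$, i.e., $\P(s')\subseteq\R(s)$ implies $\R(s')\subseteq\R(s)$. The proof of this lemma is a two-step chase: given $W\in\R(s')$ and an evidence pair $\langle X,Y\rangle$ for $W$, first pick $W''\in\P(s')$ that is evidence of $\langle X,Y\rangle$ using $W\in\R(s')$, then use $W''\in\R(s)$ (from Zag) to produce $W'\in\P(s)$ that is evidence of $\langle X,Y\rangle$. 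Apply this lemma once in each direction: Zig gives $\R(s)\subseteq\R(s')$, Zag gives $\R(s')\subseteq\R(s)$, hence equality.

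The main obstacle, if any, is just articulating the transitivity lemma cleanly; once stated it is mechanical, and the interesting conceptual content is already packaged inside $\R$. After this, the theorem follows immediately by combining with Equivalence Theorem I: same $\D(X,Y)$-theory iff Zig and Zag iff $\R(s)=\R(s')$.
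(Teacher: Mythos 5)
Your proof is correct. The paper's own proof is only the remark that it is ``similar to the proof of Theorem \ref{eq1th}'', i.e.\ it intends a direct semantic argument via Lemma \ref{ev2le}: if $\R(s)=\R(s')$ then every evidence witnessed in $\P(s)$ is witnessed in $\P(s')$ and vice versa, and conversely a mismatch $W\in\R(s)\backslash\R(s')$ yields a pair $\lr{X,Y}$ with $\M,s\vDash\D(X,Y)$ but $\M',s'\nvDash\D(X,Y)$. You instead use Theorem \ref{eq1th} as a black box and finish with pure set manipulation: Zig and Zag become $\P(s)\subseteq\R(s')$ and $\P(s')\subseteq\R(s)$, the trivial inclusion $\P(s)\subseteq\R(s)$ handles one direction, and your transitivity lemma (the two-step chase through an intermediate $W''\in\P(s')$ showing that $\P(s')\subseteq\R(s)$ forces $\R(s')\subseteq\R(s)$) upgrades the two inclusions to the equality $\R(s)=\R(s')$. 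Both routes are sound and equally short. What yours buys is that it makes explicit the closure-operator character of generativity --- generating from $\R(s)$ yields nothing beyond $\R(s)$ --- which the paper only gestures at when it calls $\R(s)$ ``the existent and the only greatest generative set''; what the paper's route buys is that no auxiliary lemma is needed, since the semantic argument through Evidence Lemma II already does the work.
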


\begin{proof}
Similar to the proof of Theorem \ref{eq1th}.
\end{proof}

Actually, the set $\R(s)$ is the existent and the only greatest generative set from the original $\P(s)$ while keeping satisfying the same formulae for modality $\D$. Therefore, it is worthwhile investigating what characteristics $\R(s)$ possesses, since it precisely determines the modal property of the pointed model $\M,s$. In the following theorem, we manage to express the generative condition for a nonempty finite set $W$ from $\P(s)$ in several different equivalent forms.

\begin{theorem}[Generative Theorem]\label{genth}
$\forall s\in S$, for any nonempty finite set $W\subseteq\BV$, we define $\Sigma(s,W)=\{W'\mid W'\in\P(s),W'\subseteq W\}$, then:

\begin{tabular}{LL}
  \\
  & W\text{ is generative from }\P(s)\\
  \iff & \bigcup\Sigma(s,W)=W,\forall Z\subset W\text{ such that }Z\neq\emptyset,\\
  & \exists W'\in\Sigma(s,W)\text{ such that }W'\cap Z\neq\emptyset\land W'\cap(W\backslash Z)\neq\emptyset\\
  \iff & \bigcup\Sigma(s,W)=W,\forall\Gamma\subset\Sigma(s,W)\text{ such that }\Gamma\neq\emptyset,\\
  & (\bigcup\Gamma)\cap(\bigcup(\Sigma(s,W)\backslash\Gamma))\neq\emptyset\\
  \iff & \bigcup\Sigma(s,W)=W,\forall W'_1,W'_2\in\Sigma(s,W),\text{ define }\mathcal{R}W'_1W'_2\text{ iff }W'_1\cap W'_2\neq\emptyset,\\
  & \text{then }\forall W'_1,W'_2\in\Sigma(s,W),W'_1\text{ connects to }W'_2\text{ by a chain of }\mathcal{R}\text{ relations}\\
\end{tabular}
\end{theorem}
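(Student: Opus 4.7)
The plan is to prove the cyclic chain (1) $\Leftrightarrow$ (2) $\Leftrightarrow$ (3) $\Leftrightarrow$ (4), numbering the four conditions in the order they appear in the theorem. The three links are essentially independent, and each needs only a small amount of set-theoretic manipulation on top of the definitions of evidence and of $\Sigma(s,W)$.

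For (1) $\Rightarrow$ (2), I plug carefully chosen pairs $\lr{X,Y}$ into the generativity hypothesis. To obtain $\bigcup \Sigma(s,W) = W$, for each $x \in W$ I use $\lr{\{x\}, W \setminus \{x\}}$, or $\lr{\{x\}, \{x\}}$ when $|W| = 1$ (the only truly degenerate case, in which the nonemptiness condition on elements of $\P(s)$ forces the witness to be exactly $\{x\}$). To obtain the straddling condition, for each $\emptyset \neq Z \subsetneq W$ I use $\lr{Z, W \setminus Z}$. For (2) $\Rightarrow$ (1), given an evidence $W$ of $\lr{X,Y}$ I set $Z := W \setminus Y$; since $W \subseteq X \cup Y$ we have $Z \subseteq X$, and the condition $W \cap Y \neq \emptyset$ forces $Z \neq W$. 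If $Z \neq \emptyset$, the straddling hypothesis yields some $W' \in \Sigma(s,W)$ with $W' \cap Z \neq \emptyset$ and $W' \cap (W \setminus Z) \neq \emptyset$, and this $W'$ is an evidence of $\lr{X,Y}$ because $Z \subseteq X$ and $W \setminus Z \subseteq Y$. If $Z = \emptyset$ then $W \subseteq Y$, and I pick $x \in W \cap X$ (nonempty by evidence) and use $\bigcup \Sigma(s,W) = W$ to locate a $W' \in \Sigma(s,W)$ through $x$.

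For (2) $\Leftrightarrow$ (3), the workhorse is the simple observation that $(\bigcup \Gamma) \cap (\bigcup(\Sigma(s,W) \setminus \Gamma)) \neq \emptyset$ iff there exist $W'_1 \in \Gamma$ and $W'_2 \in \Sigma(s,W) \setminus \Gamma$ with $W'_1 \cap W'_2 \neq \emptyset$. The direction (2) $\Rightarrow$ (3) proceeds by setting $Z := \bigcup \Gamma$; I split on the subcase $Z = W$, in which nonemptiness of $\Sigma(s,W) \setminus \Gamma$ together with elements of $\P(s)$ being nonempty gives the conclusion directly, versus $Z \subsetneq W$, in which the straddling $W'$ supplied by (2) must land outside $\Gamma$, producing the required crossing. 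The direction (3) $\Rightarrow$ (2) is a contrapositive: if no $W' \in \Sigma(s,W)$ straddles $Z$ and $W \setminus Z$, then $\Sigma(s,W)$ cleanly partitions into $\Gamma_Z = \{W' \in \Sigma(s,W) \mid W' \subseteq Z\}$ and $\Gamma_{\bar Z} = \{W' \in \Sigma(s,W) \mid W' \subseteq W \setminus Z\}$, both nonempty thanks to $\bigcup \Sigma(s,W) = W$ covering both $Z$ and $W \setminus Z$, and this partition falsifies (3) via the observation above.

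For (3) $\Leftrightarrow$ (4), this is the standard elementary fact that a graph is connected iff it admits no partition of its vertex set into two nonempty parts with no edges between them; via the same observation, (3) exactly states the "no nontrivial disconnecting partition" property for the graph on $\Sigma(s,W)$ whose edges are given by $\mathcal{R}$, which is the textbook characterization of connectedness by chains. The main obstacle across the whole proof is bookkeeping around the degenerate boundary cases (namely $|W| = 1$, $Z \in \{\emptyset, W\}$, and $\Gamma = \Sigma(s,W)$), where the straddling requirement vanishes or the partition becomes trivial; the logical content is light, but the proof must be written carefully so that these corners do not silently break any step.
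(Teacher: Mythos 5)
Your proposal is correct and takes essentially the same route as the paper: your link $(1)\Leftrightarrow(2)$, obtained by instantiating generativity at the splits $\lr{Z,W\backslash Z}$ in one direction and using monotonicity (weakening) of evidence in the other, is precisely the paper's Generative Lemma, and the remaining links $(2)\Leftrightarrow(3)\Leftrightarrow(4)$ are the graph-theoretic part the paper dismisses as following ``not difficultly.'' The only difference is that you work out in full the details and degenerate cases ($|W|=1$, $Z=\emptyset$, $\bigcup\Gamma=W$) that the paper leaves entirely to the reader.
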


\begin{proof}
Let us concentrate on the following crucial lemma, from which the proof of this theorem follows not difficultly.
\end{proof}

\begin{lemma}[Generative Lemma]\label{genle}
$\forall s\in S$, for any nonempty finite set $W\subseteq\BV$, $W$ is generative from $\P(s)$ iff:

\begin{itemize}
  \item if $|W|=1$, then $W\in\P(s)$.
  \item if $|W|\geqslant 2$, then $\forall Z\subset W$ such that $Z\neq\emptyset$, $\exists W'\in\P(s)$ such that $W'$ is an evidence of $\lr{Z,W\backslash Z}$.
\end{itemize}
\end{lemma}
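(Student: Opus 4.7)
The plan is to prove both directions by unpacking the definition of evidence and performing a small case analysis on the structure of $W$. The $|W|=1$ case is essentially immediate, so the substantive content lives in the $|W|\geqslant 2$ case, where the forward direction is trivial and the backward direction requires a slightly clever choice of partition.

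For the $|W|=1$ case with $W=\{x\}$, the forward direction follows by instantiating generativity at $X=Y=\{x\}$: since $\{x\}$ is an evidence of $\lr{X,Y}$ by inspection, generativity supplies some $W'\in\P(s)$ that is also an evidence of $\lr{\{x\},\{x\}}$, but the only such $W'$ is $\{x\}$ itself, forcing $\{x\}\in\P(s)$. Conversely, if $\{x\}\in\P(s)$ and $\{x\}$ is an evidence of some $\lr{X,Y}$, then $x$ must lie in $X\cap Y$, so $\{x\}$ itself serves as the required witness.

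For the $|W|\geqslant 2$ case, the forward direction is immediate: given any nonempty proper $Z\subset W$, one notes that $W$ is trivially an evidence of $\lr{Z, W\backslash Z}$ (as $W\cap Z = Z$, $W\cap(W\backslash Z) = W\backslash Z$, and $W = Z\cup(W\backslash Z)$), and generativity supplies the required $W'\in\P(s)$. For the backward direction, given any $\lr{X,Y}$ of which $W$ is an evidence, the key step is to produce a nonempty proper $Z\subset W$ satisfying $Z\subseteq X$ and $W\backslash Z\subseteq Y$. Once such a $Z$ is in hand, any $W'\in\P(s)$ that is an evidence of $\lr{Z,W\backslash Z}$ is automatically an evidence of $\lr{X,Y}$, since $W'\subseteq W\subseteq X\cup Y$, $W'\cap X\supseteq W'\cap Z\neq\emptyset$, and $W'\cap Y\supseteq W'\cap(W\backslash Z)\neq\emptyset$. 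To construct $Z$, I would split into two subcases: if $W\not\subseteq X$, take $Z = W\cap X$, which is nonempty by hypothesis and whose complement $W\backslash X$ is nonempty and contained in $Y$ because $W\subseteq X\cup Y$; if instead $W\subseteq X$, pick any $y\in W\cap Y$ and let $Z = W\backslash\{y\}$, which is nonempty precisely because $|W|\geqslant 2$.

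The main obstacle is this second subcase of the backward direction. The ``natural'' choice $Z=W\cap X$ degenerates when $W\subseteq X$ into $Z=W$, which is not a proper subset and so is inadmissible for invoking the hypothesis. Recognizing that one must instead shave off a single element of $W\cap Y$ is exactly where the assumption $|W|\geqslant 2$ earns its keep, and verifying that the resulting $Z$ and $W\backslash Z$ land in $X$ and $Y$ respectively is the only delicate bookkeeping in the argument.
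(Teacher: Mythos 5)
Your proof is correct and follows essentially the same route as the paper: the paper's own argument just cites the weakening fact for evidence, and your partition $Z=W\cap X$ (with the degenerate case $W\subseteq X$ handled by removing one element of $W\cap Y$) is exactly the bookkeeping needed to reduce an arbitrary $\lr{X,Y}$ to a split $\lr{Z,W\backslash Z}$ and then weaken back up.
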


\begin{proof}
The direction from left to right is immediate. For the direction from right to left, we only have to make use of one simple fact about evidence:

\begin{itemize}
    \item If $W$ is an evidence of $\lr{X,Y}$ and $X\subseteq X'$, then $W$ is an evidence of $\lr{X',Y}$.
\end{itemize}

which, as a matter of fact, can be correspondingly written into a sound axiom regarding modality $\D$:

\begin{align*}
  \D(X,Y)\to\D(X',Y)\text{, given }X\subseteq X'\text{ (Weakening Rule)}\\
\end{align*}

Full axiomatization will later be discussed in the following Section \ref{axiom}.
\end{proof}

The last equivalent condition in Theorem \ref{genth} is to say, we can construct an undirected graph over $\P(s)$ by its elements' intersection relation, and all the generative sets are exactly union of some connected nonempty finite subgraph. This provides us with a clear picture and an intuitive understanding about where every generative set comes from and what $\R(s)$ looks like. Hence given $\P(s)$, there is an explicit algorithm to calculate all the generative nonempty finite sets $W\subseteq\BV$ so as to obtain $\R(s)$.

Finally, taking into account all the modalities including $\K$, $\G$, $\D_g$ and $\D_l$, we are able to define the full bisimulation relation between two models $\M$ and $\M'$:

\begin{definition}[Bisimulation]
A nonempty binary relation $B\subseteq S\times S'$ is called a bisimulation between two models $\M$ and $\M'$ iff:

\begin{itemize}
  \item If $sBs'$, then $\forall p\in\BP$, $T(s,p)=T(s',p)$.
  \item If $sBs'$, then $\R_g(s)=\R_g(s')$.
  \item If $sBs'$, then $\R_l(s)=\R_l(s')$.
  \item Zig for $\K$: if $sBs'$ and $s\sim_i t$, then $\exists t'\in S'$ such that $tBt'$ and $s'\sim_i t'$.
  \item Zig for $\G$: if $sBs'$ and $s\approx t$, then $\exists t'\in S'$ such that $tBt'$ and $s'\approx t'$.
  \item Zag for $\K$: if $sBs'$ and $s'\sim_i t'$, then $\exists t\in S$ such that $tBt'$ and $s\sim_i t$.
  \item Zag for $\G$: if $sBs'$ and $s'\approx t'$, then $\exists t\in S$ such that $tBt'$ and $s\approx t$.
\end{itemize}

When $B$ is a bisimulation between two models $\M$ and $\M'$, we write $B:\M\ \underline{\leftrightarrow}\ \M'$. Furthermore if $sBs'$, we write $B:\M,s\ \underline{\leftrightarrow}\ \M',s'$. If there is a bisimulation $B$ such that $B:\M,s\ \underline{\leftrightarrow}\ \M',s'$, we write $\M,s\ \underline{\leftrightarrow}\ \M',s'$.

We write $\M,s\leftrightsquigarrow\M',s'$, when for any $\EDL$-formula $\phi$, $\M,s\vDash\phi$ iff $\M',s'\vDash\phi$.
\end{definition}

\begin{theorem}[Hennessy-Milner Theorem]
For any two m-saturated models $\M$ and $\M'$, $\forall s\in S$, $\forall s'\in S'$, $\M,s\ \underline{\leftrightarrow}\ \M',s'$ iff $\M,s\leftrightsquigarrow\M',s'$.
\end{theorem}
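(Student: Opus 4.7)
The plan is to prove both directions in the style of the classical Hennessy--Milner theorem, with the novelty that the two $\D$-clauses of the bisimulation definition are discharged purely through Theorem \ref{eq2th}, not through any per-witness zig/zag condition.

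\textbf{Forward direction.} Assume $B:\M,s\ \underline{\leftrightarrow}\ \M',s'$. I would proceed by induction on the $\EDL$-formula $\phi$ to show $\M,s\vDash\phi$ iff $\M',s'\vDash\phi$. The atomic and Boolean cases are routine, and the cases $\K\phi$ and $\G\phi$ use the four zig/zag clauses exactly as in the classical Kripke-style proof. The cases $\D_g(X,Y)$ and $\D_l(X,Y)$ in fact require no induction at all: the hypotheses $\R_g(s)=\R_g(s')$ and $\R_l(s)=\R_l(s')$ combined with Theorem \ref{eq2th} immediately give that the two pointed models agree on every $\D_g$- and $\D_l$-formula. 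Note that this direction does not use m-saturation.

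\textbf{Backward direction.} Assuming $\M,\M'$ are m-saturated, I would take the candidate bisimulation to be the modal equivalence relation itself,
\[
B=\{(s,s')\in S\times S'\mid \M,s\leftrightsquigarrow\M',s'\},
\]
and verify each of the seven clauses at an arbitrary pair $(s,s')\in B$. The propositional clause is immediate. The clauses $\R_g(s)=\R_g(s')$ and $\R_l(s)=\R_l(s')$ fall out of Theorem \ref{eq2th} applied in the reverse direction: since $sBs'$ guarantees that $\M,s$ and $\M',s'$ satisfy exactly the same $\D_g$- (resp.\ $\D_l$-) formulas, the theorem forces the corresponding $\R$-sets to coincide. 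The four zig/zag clauses for $\K$ and $\G$ are the textbook m-saturation argument: given $s\sim_i t$, collect the full type $\Phi=\{\phi\mid\M,t\vDash\phi\}$; for every finite $\Phi_0\subseteq\Phi$ the formula $\neg\K\neg\bigwedge\Phi_0$ holds at $s$ and hence, by $sBs'$, at $s'$, yielding some $\sim_i$-successor of $s'$ that satisfies $\bigwedge\Phi_0$; m-saturation of $\M'$ with respect to $\sim_i$ then realizes the whole of $\Phi$ at a single $t'$ with $s'\sim_i t'$, and since $\Phi$ is complete we obtain $\M,t\leftrightsquigarrow\M',t'$, i.e.\ $tBt'$. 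The argument for $\G$ is identical with $\approx$ in place of $\sim_i$, and the two zag directions are symmetric.

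\textbf{Main obstacle.} The genuine technical work is concentrated in the $\R$-clauses, and this is precisely why the bisimulation notion in the paper was designed around the derived sets $\R_g(s),\R_l(s)$: Theorem \ref{eq2th} converts what would otherwise be intricate combinatorial zig/zag conditions about evidence witnesses and $\approx$-successors into one clean algebraic identity, so neither direction has to wrestle with witness-matching for $\D$. The only delicate point is confirming that the paper's m-saturation hypothesis supplies the required instance of compactness for both $\sim_i$-types and $\approx$-types; provided m-saturation is read in the natural modality-by-modality sense, no extra saturation condition tailored to $\D_g$ or $\D_l$ is needed, because Theorem \ref{eq2th} already transfers $\R$-sets across $B$ without witnesses.
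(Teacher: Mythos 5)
Your proposal is correct and takes essentially the same route as the paper: the paper's proof simply defers the $\K$ and $\G$ zig/zag clauses to the standard m-saturation argument in Blackburn et al.\ and observes that the added $\D_g$/$\D_l$ clauses follow directly from Theorem \ref{eq2th}, which is exactly how you handle them. You have only written out in full the textbook details that the paper leaves to the citation.
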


\begin{proof}
See \cite{Blackburn}. The definition of m-saturated models also appears as Definition 2.53 in that book. It is only the cases for modalities $\D_g$ and $\D_l$ that are added, which just follow from Theorem \ref{eq2th}.
\end{proof}

\section{Axiomatization}\label{axiom}

We only provide a sound and strongly complete axiomatization for language $\EDG$. Nevertheless, the same as the assumed routine in this paper, axioms without subscripts attached to $\D$ are sound with respect to both $\D_g$ and $\D_l$.

To start with, we may notice some obviously sound axioms to characterize the properties of modality $\D$:

\begin{enumerate}
  \item $\D(\emptyset,X)\leftrightarrow\bot$ (Empty Set Rule)
  \item $\D(X,Y)\leftrightarrow\D(Y,X)$ (Symmetry Rule)
  \item $\D(X,Y)\to\D(X',Y)$, given $X\subseteq X'$ (Weakening Rule)
  \item $\D(X,Y)\leftrightarrow\D(X\backslash Y,Y)\lor\D(X\cap Y,Y)$ (Separation Rule)
\end{enumerate}

Although these na\"{i}ve axioms indeed look very similar to those in independence logic \cite{Axiom}, pitifully in our dependence epistemic logic, they alone are away from being complete. The good news is that, we can instead find some conciser axioms, which entirely grasp the full properties of modality $\D$ itself, and from which all the above sound axioms can surely be deduced.

For brevity, let us first define an auxiliary notation:

\begin{definition}[$\Q(W)$]
For any nonempty finite set $W\subseteq\BV$, we define:

\begin{align*}
  \Q(W)::=\left\{
  \begin{array}{cc}
    \D(W,W), & |W|=1\\
    \bigwedge\limits_{Z\subset W,Z\neq\emptyset}\D(Z,W\backslash Z), & |W|\geqslant 2\\
  \end{array}
  \right.
\end{align*}
\end{definition}

Recall Lemma \ref{genle}, readers should be aware that this $\Q(W)$ precisely depicts the minimum necessary $\D(X,Y)$ formulae, such that $W$ is an evidence of $\lr{X,Y}$. Taking advantage of this notation, we can write down rather concise sound axioms about modality $\D$ so as to obtain a complete axiomatization, as the following Q and E Axioms for $\D$ in Theorem \ref{axith}:

\begin{theorem}[Axiomatization]\label{axith}
The following proof system is sound and strongly complete with respect to language $\EDG$.

\begin{center}
\begin{tabular}{|R|L|}
\hline
  \text{TAUT} & \text{all instances of tautologies}\\
  \text{MP} & \text{from }\phi\text{ and }\phi\to\psi\text{ infer }\psi\\
  \text{NEC for }\K & \text{from }\phi\text{ infer }\K\phi\\
  \text{DIST for }\K & \K(\phi\to\psi)\to(\K\phi\to\K\psi)\\
  \text{T for }\K & \K\phi\to\phi\\
  \text{4 for }\K & \K\phi\to\K\K\phi\\
  \text{5 for }\K & \neg\K\phi\to\K\neg\K\phi\\
  \text{NEC for }\G & \text{from }\phi\text{ infer }\G\phi\\
  \text{DIST for }\G & \G(\phi\to\psi)\to(\G\phi\to\G\psi)\\
  \text{T for }\G & \G\phi\to\phi\\
  \text{4 for }\G & \G\phi\to\G\G\phi\\
  \text{5 for }\G & \neg\G\phi\to\G\neg\G\phi\\
  \text{Q for }\D & \D(X,Y)\leftrightarrow\bigvee\limits_{X'\subseteq X,Y'\subseteq Y,X',Y'\neq\emptyset}\Q(X'\cup Y')\text{, given }X,Y\neq\emptyset\\
  \text{E for }\D & \D(\emptyset,X)\leftrightarrow\D(X,\emptyset)\leftrightarrow\bot\\
  \text{4 for }\D_g & \D_g(X,Y)\to\G\D_g(X,Y)\\
\hline
\end{tabular}
\end{center}
\end{theorem}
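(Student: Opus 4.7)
My plan is to verify the axiomatization in two stages: soundness axiom by axiom, then strong completeness via a Henkin-style canonical model. Soundness of the S5 axioms for $\K$ and $\G$ is immediate because $\sim_i$ and $\approx$ are equivalence relations. Among the $\D$-axioms, \textbf{E for $\D$} is vacuous since no evidence can intersect $\emptyset$, and \textbf{4 for $\D_g$} is immediate because the semantic clause for $\D_g(X,Y)$ quantifies only over the $\approx$-equivalence class of the evaluation point. The substantive axiom is \textbf{Q for $\D$}: combining Lemma~\ref{ev2le} with Lemma~\ref{genle}, if $\D(X,Y)$ holds then the witnessing evidence $W\in\P(s)$ decomposes as $X'\cup Y'$ with $X'=W\cap X$ and $Y'=W\cap Y$ nonempty, and $W\in\P(s)$ is trivially generative from $\P(s)$, so $\Q(X'\cup Y')$ holds; conversely, $\Q(X'\cup Y')$ forces $X'\cup Y'$ to be generative from $\P(s)$, hence some $W'\in\P(s)$ is an evidence of $\langle X',Y'\rangle$, and therefore, since $X'\subseteq X$ and $Y'\subseteq Y$, also of $\langle X,Y\rangle$.

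For strong completeness I extend a given consistent $\Gamma\subseteq\EDG$ to a maximal consistent set $\Gamma^+$ by Lindenbaum, then build a canonical model whose worlds are pairs $(\gamma,\ell)$, with $\gamma$ a maximal consistent set and $\ell$ a ``label'' encoding a variable assignment. The relations $\sim_i$ and $\approx$ on pairs are inherited from the usual S5 canonical relations on the $\gamma$-component, and $T((\gamma,\ell),p)=1$ iff $p\in\gamma$. Axiom \textbf{4 for $\D_g$} combined with \textbf{T for $\G$} makes the family $\mathcal{G}_C=\{W\subseteq\BV\mid W\text{ nonempty finite},\ \Q(W)\in\gamma\text{ for any/every }\gamma\in C\}$ well-defined for each $\approx$-equivalence class $C$ of maximal consistent sets; a short syntactic check using axiom \textbf{Q for $\D$} moreover shows that $\mathcal{G}_C$ is already closed under ``generation'' in the sense of Lemma~\ref{genle}, which is what makes $\R_g$ come out right below.

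The crucial step is to design $V\supseteq\BV$ and $U$ so that $\P_g((\gamma,\ell))=\mathcal{G}_C$ whenever $\gamma\in C$, since Theorem~\ref{eq2th} will then yield $\R_g((\gamma,\ell))=\mathcal{G}_C$ as well. For each class $C$ and each $W\in\mathcal{G}_C$ I add two labelled worlds sharing the same maximal-consistent-set component, whose $\BV$-values coincide outside $W$ and differ on every variable in $W$, together with a dedicated helper variable $h_W\in V\setminus\BV$ whose value is shared inside this pair but distinct from the values it takes at every other pair and at every ``plain'' labelled world. The clause in the definition of $\Delta$ that forces $\Delta(u,v)=\emptyset$ whenever $u,v$ disagree on some non-$\BV$ variable then kills every unwanted cross-pair evidence, leaving exactly the members of $\mathcal{G}_C$ as the nonempty $\Delta$'s realized within $C$.

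The Truth Lemma is then proved by induction on $\EDG$-formulae. The Boolean, $\K$, and $\G$ cases are standard, since the $\sim_i$- and $\approx$-classes on labelled worlds project down to the usual canonical classes on maximal consistent sets and thus supply all existential witnesses for $\neg\K$ and $\neg\G$. For $\D_g(X,Y)$, Lemma~\ref{ev2le} reduces the semantic condition to ``some $W\in\P_g((\gamma,\ell))=\mathcal{G}_C$ is an evidence of $\langle X,Y\rangle$'', while axiom \textbf{Q for $\D$} reduces the syntactic condition to ``some nonempty $X'\subseteq X$, $Y'\subseteq Y$ satisfy $X'\cup Y'\in\mathcal{G}_C$''; these two conditions are equivalent by taking $X':=W\cap X$, $Y':=W\cap Y$ one way and $W:=X'\cup Y'$ the other. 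The main obstacle I anticipate is precisely the helper-variable bookkeeping of the previous step: one must simultaneously realize every desired $W\in\mathcal{G}_C$ as a $\Delta$, suppress every undesired $\Delta$, and keep the $\sim_i$- and $\approx$-structures on labelled worlds coherent with those on maximal consistent sets so that the standard Henkin witness arguments for $\K$ and $\G$ go through unchanged.
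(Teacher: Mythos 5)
Your overall strategy matches the paper's: soundness axiom by axiom (the paper omits this as routine, but your argument for Q via Lemmas~\ref{ev2le} and~\ref{genle} is exactly right), then a canonical model in which, within each $\approx$-class, the nonempty finite $\Delta$'s realized are precisely the sets $W$ with $\Q(W)$ in the member MCSs, followed by the same Truth Lemma reduction. The one genuinely different ingredient is how you suppress spurious evidence across the dedicated pairs: you add helper variables $h_W\in V\setminus\BV$ and exploit the clause that $\Delta(u,v)=\emptyset$ once $u,v$ disagree outside $\BV$, whereas the paper keeps $V^C=\BV^C$, expands $\BV$ to a countably infinite $\BV^C$, and instead arranges that any two assignment functions attached to different $W$'s differ on \emph{infinitely many} variables, so the resulting $\Delta$'s can never be evidence for finite $X,Y$. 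Your device is arguably more elementary (it also lets you replace the paper's well-ordering of $S_\approx$ and explicit appeal to the Axiom of Choice with plain labelled copies $(\gamma,\ell)$), at the mild cost of working with a $V$ strictly larger than $\BV$.

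One concrete hole remains in your bookkeeping, beyond the within-pair agreement of all helpers that you already flag: the ``plain'' labelled worlds. Two plain worlds $(\gamma,0)$ and $(\gamma',0)$ in the same $\approx$-class agree on every helper variable, so $\Delta$ between them is just the set of $\BV$-variables on which their assignments differ; if you do not control this, it can be a nonempty finite set outside $\mathcal{G}_C$, breaking $\P_g=\mathcal{G}_C$ and hence the Truth Lemma. The fix is the paper's $f_0$: give every plain world in a class one and the same (say constant) $\BV$-assignment, so these $\Delta$'s are empty. With that stipulation added, your construction goes through.
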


\begin{proof}
We only show completeness. The proof is almost routine, so we concentrate on how the canonical model is built and on the Truth Lemma for modality $\D_g$. Notice that the Axiom of Choice has to be made use of in the proof.
\end{proof}

\begin{definition}[Canonical Model]
For a fixed language with a set of propositions $\BP$ and a set of variables $\BV$, we first expand this language to $\BP^C$ and $\BV^C$, such that $\BP^C=\BP$, $\BV^C\supseteq\BV$, and that $\BV^C$ is countably infinite. Obviously, if an MCS is satisfied in the canonical model of the expanded language, its restriction down to the original language will also be satisfied in the same model.

The canonical dependence epistemic model $\M^C$ is $\lr{S^C,T^C,V^C,U^C,\sim_i^C,\approx^C}$:

\begin{itemize}
  \item $S^C$ is the set of all MCSs.
  \item $T^C:S^C\times\BP^C\to\{0,1\}$. $\forall s\in S^C$, $\forall p\in\BP^C$, $T^C(s,p)=1$ iff $p\in s$.
  \item $V^C=\BV^C$.
  \item $\sim_i^C$ is an equivalence relation over $S^C$. $\forall s,t\in S^C$, $s\sim_i^Ct$ iff $\{\K\phi\mid\K\phi\in s\}=\{\K\phi\mid\K\phi\in t\}$.
  \item $\approx^C$ is an equivalence relation over $S^C$. $\forall s,t\in S^C$, $s\approx^Ct$ iff $\{\G\phi\mid\G\phi\in s\}=\{\G\phi\mid\G\phi\in t\}$.
  \item $U^C:S^C\times V^C\to\mathbb{N}$. For each fixed $\approx^C$ equivalence class $S_\approx\subseteq S^C$, we assign $V^C$'s values on every possible world $s\in S_\approx$ as the following procedure:
\end{itemize}
\end{definition}

By the 4 Axiom for $\D_g$ in Theorem \ref{axith}, it is easy to see that if $s\approx^Ct$, then $\{\D_g(X,Y)\mid\D_g(X,Y)\in s\}$ = $\{\D_g(X,Y)\mid\D_g(X,Y)\in t\}$. So suppose arbitrary $s\in S_\approx$, $W_\approx=\{\text{nonempty finite set }W\mid W\subset\BV^C,\Q_g(W)\in s\}$ is a well defined set, regardless of which possible world $s$ we choose from $S_\approx$.

\begin{claim}
$W_\approx$ is countable. Therefore, we can suppose a well order $<_W\cong\omega$ on it.
\end{claim}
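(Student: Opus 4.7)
The plan is to argue by pure cardinality: $W_\approx$ is a set of finite subsets of a countably infinite set, hence at most countable, and then a standard well-ordering of type at most $\omega$ can be fixed by the Axiom of Choice (which we are already invoking elsewhere in the completeness proof).

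First I would recall that the expanded variable set $\BV^C$ was defined to be countably infinite, so $V^C = \BV^C$ can be enumerated as $\{x_0, x_1, x_2, \ldots\}$. The collection
\[
\{W \mid W \subseteq \BV^C,\ W \text{ nonempty and finite}\}
\]
is then countable by the usual argument: it is the countable union, over $n \in \mathbb{N}$, of the sets of $n$-element subsets of $\BV^C$, each of which injects into $(\BV^C)^n \cong \mathbb{N}^n$, a countable set. A countable union of countable sets is countable.

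Next I would observe that $W_\approx$ is by definition a subset of this collection, since it is carved out by the additional membership condition $\Q_g(W) \in s$. Hence $W_\approx$ is itself at most countable. The fact that this definition does not depend on the particular representative $s \in S_\approx$ has just been established from the 4 Axiom for $\D_g$, so $W_\approx$ is unambiguously defined, and countability is a property of the set and not of its presentation.

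Finally, having shown $|W_\approx| \leq \aleph_0$, I would fix a bijection between $W_\approx$ and an initial segment of $\omega$ (all of $\omega$ if $W_\approx$ is infinite, and an ordinal $n < \omega$ otherwise) and transport the natural order; the author's notation $<_W \cong \omega$ I would read as "order type at most $\omega$", which is all that the later enumeration in the construction actually requires. I do not anticipate any real obstacle here — the statement is essentially a counting observation, with the only subtle point being that well-definedness of $W_\approx$ across the $\approx^C$-equivalence class needs the 4 axiom, which has already been noted immediately before the claim.
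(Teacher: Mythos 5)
Your argument is correct and is exactly the standard cardinality observation the paper relies on: the paper states this claim without proof, implicitly treating it as obvious that a set of nonempty finite subsets of the countably infinite $\BV^C$ is at most countable. Your additional remarks on well-definedness via the 4 Axiom and on reading $<_W\cong\omega$ as ``order type at most $\omega$'' (to cover the case where $W_\approx$ is finite or empty) are both accurate and do not change the substance.
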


We define a constant function $f_0:V^C\to\mathbb{N}$, $\forall x\in V^C$, $f_0(x)=0$.

\begin{lemma}[Canonical Assignment]
For every $W\in W_\approx$, we can simultaneously find two corresponding functions $f_1^W:V^C\to\mathbb{N}$ and $f_2^W:V^C\to\mathbb{N}$ such that:

\begin{itemize}
  \item $\{f_1^W(x)\neq f_2^W(x)\mid x\in V^C\}=W$;
  \item if $W_1,W_2\in W_\approx$, $W_1\neq W_2$, then $\{f_i^{W_1}(x)\neq f_j^{W_2}(x)\mid x\in V^C\}$ is countably infinite, $i,j\in\{1,2\}$;
  \item $\{f_i^W(x)\neq f_0(x)\mid x\in V^C\}$ is countably infinite, $i\in\{1,2\}$.
\end{itemize}
\end{lemma}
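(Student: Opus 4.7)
The plan is to give an explicit construction based on the well order $<_W \cong \omega$ on $W_\approx$, which avoids any appeal to choice at this step. Enumerate $W_\approx = \{W_n : n < \omega\}$ along $<_W$, and for each index $n$ reserve a distinct nonzero ``background value'' $b_n \in \mathbb{N}$, say $b_n = n+1$. Since each $W \in W_\approx$ is a nonempty finite subset of the countably infinite set $\BV^C = V^C$, there is plenty of room off $W$ in which to witness the various ``countably infinite'' disagreement conditions.

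Concretely, for every $W_n \in W_\approx$ and every $x \in V^C$ I would set
\[
  f_1^{W_n}(x) = \begin{cases} 0 & x \in W_n,\\ b_n & x \notin W_n,\end{cases} \qquad f_2^{W_n}(x) = \begin{cases} 1 & x \in W_n,\\ b_n & x \notin W_n.\end{cases}
\]
The three bullets are then direct verifications. For the first, $f_1^{W_n}$ and $f_2^{W_n}$ both take the background value $b_n$ off $W_n$, and take the distinct values $0$ and $1$ on $W_n$, so their disagreement set is exactly $W_n$. For the second, if $W_m \neq W_n$ in $W_\approx$, then $m \neq n$ and hence $b_m \neq b_n$; on the cofinite set $V^C \setminus (W_m \cup W_n)$ we have $f_i^{W_m}(x) = b_m \neq b_n = f_j^{W_n}(x)$ independently of $i,j \in \{1,2\}$, which already supplies countably infinitely many disagreements since $V^C$ is countably infinite while $W_m \cup W_n$ is finite. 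For the third, $f_i^{W_n}(x) = b_n \geq 1 \neq 0 = f_0(x)$ on the cofinite set $V^C \setminus W_n$, again countably infinite.

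The only real subtlety is the ``simultaneous'' requirement: one must ensure that the pair $f_1^{W_n}, f_2^{W_n}$ does not accidentally violate a disagreement condition against previously built $f_i^{W_m}$. The scheme above sidesteps this by making the off-$W_n$ behaviour depend on the index $n$ only through the distinct background values $b_n$, so that every pairwise comparison between functions associated with different $W_m, W_n$ reduces to $b_m$ vs $b_n$ outside a finite exceptional set. Any use of the Axiom of Choice signalled in the overall completeness proof must therefore come in elsewhere, for example when stitching together such assignments across the many $\approx^C$-equivalence classes of $S^C$, not in the construction of this lemma itself.
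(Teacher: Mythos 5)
Your construction is correct and follows essentially the same route as the paper's own proof: both exploit that $W_\approx$ is countable and each $W\in W_\approx$ is finite so as to give the pair $f_1^W,f_2^W$ a common value off $W$ that separates it, on a cofinite (hence countably infinite) set, from the functions attached to every other $W'$ and from $f_0$. Your closed-form choice of background values $b_n=n+1$ merely makes explicit what the paper sketches as a one-by-one designation with pairwise disjoint finite ranges avoiding $0$, and your side remark that no appeal to choice is needed at this step (it enters only when well-ordering $S_\approx$) is accurate.
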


\begin{proof}
Noticing that there are countably infinite variables in $V^C$ which can be assigned to countably infinite values, while $W_\approx$ is also countable and all the sets $W\in W_\approx$ are finite, we are sure that these requirements can be satisfied. For example, we manage to designate $f_1^W$ and $f_2^W$ for every $W\in W_\approx$ one by one, along the well order $<_W$. Since every $W$ is finite, to satisfy the first requirement, the ranges of $f_1^W$ and $f_2^W$ can be controlled to be both finite. For the second requirement, if $W_1<_W W_2$, we let the ranges of $f_i^{W_2}$ and $f_j^{W_1}$ not intersect. For the third requirement, we let $0$ not be in $f_i^W$'s range.
\end{proof}

We collect all these functions as $F_\approx=\{f_i^W\mid W\in W_\approx,i\in\{1,2\}\}\cup\{f_0\}$.

\begin{claim}
$F_\approx$ is countable. Therefore, we can suppose a well order $<_F$ on it.
\end{claim}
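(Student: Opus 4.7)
The claim is essentially a cardinality bookkeeping step, so the plan is just to trace the definition of $F_\approx$ back through the previous claim on $W_\approx$. First I would invoke the preceding claim, which already tells us that $W_\approx$ is countable. Then I would observe that $F_\approx$ is built from $W_\approx$ by pairing each $W \in W_\approx$ with the two-element index set $\{1,2\}$ and adjoining the single extra function $f_0$.

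Concretely, the plan is to exhibit a surjection $g: (W_\approx \times \{1,2\}) \sqcup \{*\} \to F_\approx$ sending $(W,i) \mapsto f_i^W$ and $* \mapsto f_0$. The domain is the disjoint union of the product of a countable set with a finite set, plus one extra point, hence countable; the surjective image of a countable set is countable, so $F_\approx$ is countable. Since $F_\approx$ is nonempty (it contains $f_0$), a well order $<_F$ on $F_\approx$ then exists by standard set-theoretic fact (indeed one can transport a well order of order type at most $\omega$ from the countable index set via $g$, choosing a single representative for each element of $F_\approx$ — here the Axiom of Choice is not even needed since the index set is already well ordered and we may simply take the least preimage under $g$).

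There is no real obstacle here; the only subtlety worth a sentence is confirming that different pairs $(W,i)$ and $(W',i')$ may in principle yield the same function, so $g$ need not be injective, but this does not affect countability since surjectivity from a countable set suffices. Thus the claim follows immediately once the previous claim on $W_\approx$ is in hand.
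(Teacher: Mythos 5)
Your argument is correct and is exactly the routine cardinality bookkeeping the paper intends: the paper states this claim without proof, treating it as immediate from the countability of $W_\approx$ and the definition $F_\approx=\{f_i^W\mid W\in W_\approx, i\in\{1,2\}\}\cup\{f_0\}$, which is precisely the surjection-from-a-countable-index-set argument you give. Your remarks that the surjection need not be injective and that no appeal to the Axiom of Choice is needed here (a well order of type at most $\omega$ can be transported from the enumeration) are accurate and, if anything, slightly more careful than the paper.
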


Then by the Well-ordering Theorem, we can also suppose a well order $<_S$ on $S_\approx$. By correlating these two well orders $<_F$ and $<_S$, we can use function $f\in F_\approx$ to assign $V^C$'s values on possible world $s\in S_\approx$, such that $\forall x\in V^C$, $U^C(s,x)=f(x)$. As any two well orders can be compared, during this correlating procedure, one and only one of the following three conditions will occur:

\begin{itemize}
  \item If $<_F\cong<_S$, done.
  \item If we first run out of functions from $F_\approx$, then we use $f_0$ to assign $V^C$'s values for all the other left possible worlds in $S_\approx$.
  \item If we first run out of possible worlds from $S_\approx$, then we arbitrarily choose one possible world $s\in S_\approx$, and copy it many times so as to match all the other left functions in $F_\approx$. All these copies of $s$, along with the original one, of course share the same $T^C$, and are in the same $\sim_i^C$ and $\approx^C$ equivalence classes. Obviously, this copy will not cause any unpleasant consequences.
\end{itemize}

\begin{lemma}[Truth Lemma for Modality $\D_g$]
$\forall s\in S^C$, $\forall$ finite subsets $X,Y\subset\BV^C$, $\D_g(X,Y)\in s\iff\M^C,s\vDash\D_g(X,Y)$.
\end{lemma}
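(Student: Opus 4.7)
The plan is to first prove the key identification $\P_g(s) = W_\approx$ for every $s \in S_\approx$, and then use Evidence Lemma II together with the Q and E axioms to translate between $\D_g(X,Y) \in s$ and $\M^C, s \vDash \D_g(X,Y)$.

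For the inclusion $W_\approx \subseteq \P_g(s)$, each $W \in W_\approx$ has associated functions $f_1^W, f_2^W \in F_\approx$, which the canonical procedure assigns to concrete worlds $u, v$ (possibly newly added copies of some world in $S_\approx$); the first clause of the Canonical Assignment Lemma then yields $\Delta(u,v) = W$ with $u \approx^C v \approx^C s$, so $W \in \P_g(s)$. For the reverse inclusion $\P_g(s) \subseteq W_\approx$, I would case-split on the pair of functions from $F_\approx$ assigned to two worlds $u, v$ in $S_\approx$. If they are the same function, $\Delta(u,v) = \emptyset$. If they are $\{f_0, f_i^W\}$ or $\{f_i^{W_1}, f_j^{W_2}\}$ with $W_1 \neq W_2$, the third and second clauses of the Canonical Assignment Lemma respectively force $\Delta(u,v)$ to be countably infinite, hence absent from $\P_g(s)$. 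The only remaining case is $\{f_1^W, f_2^W\}$ for some $W \in W_\approx$, which yields $\Delta(u,v) = W$.

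Armed with $\P_g(s) = W_\approx$, the biconditional reduces to bookkeeping. In the principal case $X, Y \neq \emptyset$: from left to right, $\D_g(X,Y) \in s$ triggers the Q Axiom, so some disjunct $\Q_g(X' \cup Y') \in s$ with nonempty $X' \subseteq X$, $Y' \subseteq Y$, whence $W := X' \cup Y' \in W_\approx = \P_g(s)$ is an evidence of $\lr{X,Y}$ and Evidence Lemma II concludes $\M^C, s \vDash \D_g(X,Y)$. Conversely, Evidence Lemma II supplies $W \in \P_g(s) = W_\approx$ that is an evidence of $\lr{X,Y}$; setting $X' := W \cap X$ and $Y' := W \cap Y$, both nonempty, with $X' \cup Y' = W$ since $W \subseteq X \cup Y$, the formula $\Q_g(W) \in s$ occurs as a disjunct of the Q Axiom and forces $\D_g(X,Y) \in s$. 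The degenerate case $X = \emptyset$ or $Y = \emptyset$ falls to the E Axiom on the syntactic side (yielding $\bot$, hence $\D_g(X,Y) \notin s$) and to the impossibility of any $W$ being an evidence of $\lr{\emptyset,\cdot}$ on the semantic side.

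The principal difficulty is the reverse inclusion $\P_g(s) \subseteq W_\approx$: everything hinges on the second and third clauses of the Canonical Assignment Lemma, which are precisely what rule out spurious finite $\Delta$-sets arising from cross-pairs of distinct indicator functions. Those finiteness-blocking clauses are the technical heart of the construction, and without them the Q Axiom step in the right-to-left direction would break down, since unwanted finite $\Delta(u,v)$'s would furnish semantic witnesses for $\D_g$ formulae that are not in $s$.
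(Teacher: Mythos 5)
Your proof is correct and follows essentially the same route as the paper's: both directions hinge on reading off, from the canonical assignment procedure, that the nonempty finite $\Delta$-sets within an $\approx^C$-class are exactly the sets $W$ with $\Q_g(W)\in s$ --- you merely make the identification $\P_g(s)=W_\approx$ explicit (via the three clauses of the Canonical Assignment Lemma) where the paper compresses it into ``from the above assignment procedure.'' One minor refinement on your side: in the right-to-left direction you close the argument with the Q Axiom itself (taking $X'=W\cap X$, $Y'=W\cap Y$), whereas the paper appeals to the Weakening Rule, which is not a primitive of the proof system in Theorem \ref{axith} and would first need to be derived from Q and E.
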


\begin{proof}
The cases when $X=\emptyset$ or $Y=\emptyset$ follow immediately from the E Axiom for $\D$ in Theorem \ref{axith}, so we concentrate on the situations when $X\neq\emptyset$ and $Y\neq\emptyset$. By Lemma \ref{ev1le}, $\M^C,s\vDash\D_g(X,Y)\iff\exists u,v\in S^C$, $u\approx^Cv\approx^Cs$, such that $\Delta(u,v)$ is an evidence of $\lr{X,Y}$.

For the direction from right to left, from the above assignment procedure of $U^C$ in the canonical model, we have $\Q_g(\Delta(u,v))\in s$. Since $\Delta(u,v)$ is an evidence of $\lr{X,Y}$, by making use of the Weakening Rule it is not difficult to reason that $\D_g(X,Y)\in s$.

For the direction from left to right, considering the Q Axiom for $\D$ in Theorem \ref{axith}, at least one of the $\Q_g(X'\cup Y')$ in the big disjunction is in $s$, and thus from the above assignment procedure of $U^C$ in the canonical model, $\exists u,v\in S^C$, $u\approx^Cv\approx^Cs$, such that $\Delta(u,v)=X'\cup Y'$. Since $X'\subseteq X,Y'\subseteq Y,X',Y'\neq\emptyset$, obviously $X'\cup Y'$ is just an evidence of $\lr{X,Y}$ and hence $\M^C,s\vDash\D_g(X,Y)$.
\end{proof}

\section{Conclusions and Future Work}\label{concl}

In this paper, we come up with dependence epistemic logic in order to reason about \textit{partial} dependency relationship between variables under an epistemic scenario. Several interesting examples are proposed, which demonstrate our language's affluent expressivity and practical usage. Besides that, the essential properties of the logic are straightforward to understand, and hence we further discuss its bisimulation relation and manage to provide a sound and strongly complete axiomatization system for the simpler sub-language $\EDG$.

Nevertheless, there still remains much work to be done in the future. The axiomatization of the full language $\EDL$ is yet unknown. It will also be helpful to elaborate on other computational properties of this logic, such as decidability. Besides, as we only deal with the presence of a single agent in this paper, extending this dependence epistemic logic to cases with multiple agents may result in more interesting results. Moreover, it seems to be an exciting idea to add other modalities into this framework so that we will be able to reason about knowing dependency, knowing value, knowing how as well as many other epistemic assertions all together.

\section*{Acknowledgements}

The author would like to thank Yanjing Wang for proposing the $\approx$ equivalence relation as well as a lot of other useful comments and suggestions on this paper.

The author would like to thank Fan Yang for coming up with the formula in Remark \ref{conre}.

The author would also like to thank all the teachers and students who have been taking part in the epistemic logic course, from discussions with whom a lot of inspirations have been stimulated.
%
%
%
%

\end{document}